\begin{document}

\theoremstyle{plain}
\newtheorem{theo}{Theorem}
\newtheorem{pred}{Statement}
\newtheorem{lem}{Lemma}

\begin{center}
{\large \bf On the irrationality measure of certain numbers}

\medskip
{\bf A.~Polyanskii\footnote[1]{Moscow Institute of Physics and Technology
E-mail: alexander.polyanskii@yandex.ru. 
The research was partially supported by the Russian Foundation for Basic Research, 
grant \textnumero 09-01-00743.}}

\abstract{
The paper presents upper estimates for the irrationality measure and the non-quadraticity measure for the numbers 
$\alpha_k=\sqrt{2k+1}\ln\frac{\sqrt{2k+1}-1}{\sqrt{2k+1}+1}, \
k\in\mathbb N.$}
\end{center}

For any irrational $\alpha$, \emph{the irrationality measure} can be defined  
as the exact upper bound on the numbers $\kappa$ such that the inequality
$$\left|\alpha -\frac{p}{q}\right|<q^{-\kappa}$$
has infinitely many rational solutions  $\frac{p}{q}$.
By $\mu(\alpha)$ denote the irrationality measure of $\alpha$.

\emph{The non-quadraticity measure} can be defined for any real  $\alpha$ which isn't 
a root of a quadratic equation as the exact upper bound on the numbers $\kappa$ such that the inequality
$$\left|\alpha - \beta\right|< H^{-\kappa}(\beta)$$
has infinitely many rational solutions in quadratic irrationalities $\beta$. 
Here $H(\beta)$ is the height of the characteristic polynomial of $\beta$ 
(taking an irreducible integer polynomial with one of the roots equal to $\beta$, 
$H$ the largest absolute value of this polynomial's coefficients). 
By $\mu_2(\alpha)$ denote the non-quadraticity measure of $\alpha$.

We are going to preset improved bounds on the irrationality measure and 
the non-quadraticity measure for the numbers
\begin{equation}\label{alpha}\alpha_k =\sqrt{2k+1}\ln \frac{\sqrt{2k+1}-1}{\sqrt{2k+1}+1},
\text{ where } k\in \mathbb{N}.\end{equation}

This paper is, in a sense, a continuation of the paper \cite{Bash2} by M. Bashmakova: 
the same integral is considered, but the denominator is estimated more accurately 
by using a certain coefficient symmetry. 
Some earlier estimates for  the irrationality measure of the numbers investigated
by the author have been obtained by A.~Heimonen, T.~Matala-Aho, and K.~V\"a\"an\"anen \cite{Heimonen}, M.~Hata \cite{Hata2},
G.~Rhin \cite{Rhin}, E.~Salnikova \cite{Saln}, M.~Bashmakova \cite{Bash1}, \cite{Bash2}.

Some of the numerical results obtained in this paper have been summarized in the table below:
\begin{center} \begin{tabular}{|r|r|r|r|r|r|r|r|r|r|}
 \hline
 $k$ &  $\mu \left(\alpha_k\right)\leqslant$ & $\mu_2 \left(\alpha_k\right)\leqslant$ & $k$ & 
 $\mu \left(\alpha_k\right)\leqslant$& $\mu_2 \left(\alpha_k\right)\leqslant$ &$k$ & 
 $\mu \left(\alpha_k\right)\leqslant$& $\mu_2 \left(\alpha_k\right)\leqslant$ \\
\hline
\hline $3$ & $6.64610\dots$&---          & 7 & $5.45248\dots$ &---       & $10$ & $3.45356\dots$ &$10.0339\dots$\\
\hline $5$ & $5.82337\dots$&---          & 8 & $3.47834\dots$ &$10.9056\dots$ & $11$ & $5.08120\dots$ &---\\
\hline $6$ & $3.51433\dots$&$12.4084\dots$ & 9 & $5.23162\dots$ &---       & $12$ & $3.43506\dots$ &$9.46081\dots$\\
\hline
\end{tabular}
\end{center}

Let  $n$ be an odd positive integer, and let 
 $a,b$ be fixed positive integers (where $b$ is odd) such that $b>4a$. Define a polynomial as follows:
$$A(x)={x+(b-2a)n\choose (b-4a)n}{x+(b-a)n\choose (b-2a)n} {x+bn\choose
bn}=$$
$$=\frac{(x+2an+1)\dots(x+(b-2a)n)}{((b-4a)n)!}\cdot\frac{(x+an+1)\dots(x+(b-a)n)}{((b-2a)n)!}\cdot\frac{(x+1)\dots(x+bn)}{(bn)!}.
$$
Consider an integral
\begin{equation*} I(z)=\frac{z^{-\frac{bn+1}{2}}}{2\pi i}\int_L A(\zeta)\left(\frac{\pi}{\sin \pi\zeta}\right)^3 (-z)^{-\zeta} d\zeta ,\end{equation*}
where $z\neq0$, the vertical line $L$ is given by the equation
$\mathfrak{Re}\;\zeta= C$, where $-(b-2a)n<C<-2an-1$, and this line is traversed from bottom to top.
We also suppose that  $(-z)^{-\zeta}=e^{-\zeta\ln (-z)}$,
where the branch of the logarithm $\ln (-z)=\ln |z| + i\arg z +i\pi$
is chosen so that $|\arg z|<\pi$.

\begin{pred}
For all  $z\in \mathbb{C}$ such that $0<|z|<1$ we have
 \begin{equation}\label{integ}
 I(z)=-\frac{1}{2}U(z)\ln ^2 z +V(z)\ln z-\frac{1}{2}W(z)-
 i\pi(U(z)\ln z -V(z)),\end{equation}
 where the functions  $U(z),V(z), W(z)\in \mathbb{Q}(z)$ are defined for $|z|<1$ by the following equations:
$$U(z)=-z^{-\frac{bn+1}{2}}\sum_{k=bn+1}^{\infty}A(-k)z^k,$$
 \begin{equation}\label{v_z}V(z)=-z^{-\frac{bn+1}{2}}\sum_{k=(b-a)n+1}^{\infty}A^{\prime}(-k)z^k,\end{equation}
 $$W(z)=-z^{-\frac{bn+1}{2}}\sum_{k=(b-2a)n+1}^{\infty}A^{\prime\prime}(-k)z^k.$$
\end{pred}

The proof of this statement (in a somewhat different form) has been given by Yuri Nesterenko \cite{Nest2}.
He has also proved the following lemma (see Lemma 1 in \cite{Nest2}).

\begin{lem}
Let  $P(x)\in \mathbb{C}[x]$ be a polynomial of degree $d$. Then for all
 $z$, $|z|<1$, we have
$$-\sum_{k=1}^{\infty}P(-k)z^k=\sum_{j=0}^{d}c_j\left(\frac{z}{z-1}\right)^{j+1},$$
where
$$c_j=\sum_{k=1}^{k=j+1}(-1)^{k-1}P(-k){j\choose
k-1}.$$
\end{lem}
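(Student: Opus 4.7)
The plan is to exploit the linearity of both sides of the asserted identity in the polynomial $P$, reducing the statement to its verification on a convenient basis of the space of polynomials of degree at most $d$. A natural basis is $P_j(x)=\binom{-x-1}{j}$ for $j=0,1,\dots,d$, which has the pleasant property $P_j(-k)=\binom{k-1}{j}$ for every integer $k\geq 1$.

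First, I would establish the power-series identity
$$\left(\frac{z}{z-1}\right)^{j+1}=(-1)^{j+1}\frac{z^{j+1}}{(1-z)^{j+1}}=(-1)^{j+1}\sum_{k=j+1}^{\infty}\binom{k-1}{j}z^k,\qquad |z|<1,$$
by expanding $(1-z)^{-(j+1)}$ in the usual way. Taking $P=P_j$, this immediately shows that the left-hand side of the lemma equals $(-1)^j\bigl(z/(z-1)\bigr)^{j+1}$. Next, I would compute the coefficients $c_\ell$ for $P=P_j$. Substituting $m=k-1$ in the definition yields
$$c_\ell=\sum_{m=0}^{\ell}(-1)^m\binom{m}{j}\binom{\ell}{m}.$$
Using the reshuffle $\binom{m}{j}\binom{\ell}{m}=\binom{\ell}{j}\binom{\ell-j}{m-j}$ together with the vanishing of the nontrivial alternating binomial sum $\sum_{i=0}^{n}(-1)^i\binom{n}{i}=0$ for $n\geq 1$, one checks that $c_j=(-1)^j$ and $c_\ell=0$ for $\ell\neq j$. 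Hence the right-hand side of the lemma, for $P=P_j$, also equals $(-1)^j\bigl(z/(z-1)\bigr)^{j+1}$, matching the left-hand side.

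Since $\deg P_j=j$, the polynomials $P_0,\dots,P_d$ form a basis of the space of polynomials of degree at most $d$, so the identity extends by linearity to every such $P$. The main (rather mild) obstacle is the combinatorial evaluation of $c_\ell$ above. An alternative route that bypasses the basis argument would be to equate coefficients of $z^k$ on both sides of the lemma directly; the resulting identity $P(-k)=\sum_{j=0}^{d}(-1)^jc_j\binom{k-1}{j}$ is exactly Newton's forward-difference interpolation formula applied to the polynomial $k\mapsto P(-k)$ at the nodes $k=1,2,\dots,d+1$, which determines $P$ uniquely since $\deg P\leq d$.
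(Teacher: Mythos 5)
Your proof is correct, but there is nothing in the paper to compare it against: the author does not prove this lemma at all, he only cites it as Lemma~1 of Nesterenko's paper \cite{Nest2}. Taken on its own terms, your argument is complete. Both sides of the identity are linear in $P$, the polynomials $P_j(x)=\binom{-x-1}{j}$ do form a basis of the space of polynomials of degree at most $d$ (since $\deg P_j=j$), the expansion $\bigl(\tfrac{z}{z-1}\bigr)^{j+1}=(-1)^{j+1}\sum_{k\geq j+1}\binom{k-1}{j}z^k$ is the standard negative binomial series, and your evaluation of the coefficients is right: with $P=P_j$ one gets $c_\ell=\binom{\ell}{j}(-1)^j\sum_{i=0}^{\ell-j}(-1)^i\binom{\ell-j}{i}$, which vanishes for $\ell>j$, vanishes for $\ell<j$ because $\binom{m}{j}=0$ there, and equals $(-1)^j$ at $\ell=j$; both sides then reduce to $(-1)^j\bigl(\tfrac{z}{z-1}\bigr)^{j+1}$. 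Your closing observation is also accurate and arguably the more illuminating route: comparing coefficients of $z^k$ turns the lemma into $P(-k)=\sum_{j=0}^{d}(-1)^jc_j\binom{k-1}{j}$, and $(-1)^jc_j$ is exactly the $j$-th forward difference of $m\mapsto P(-m-1)$ at $0$, so the identity is Newton's interpolation formula for a degree-$d$ polynomial. Either version is a legitimate self-contained replacement for the citation.
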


It has also been shown (see Statement 2 and Lemma 1 in \cite{Bash2}) that 
\begin{equation}\label{uvw}U(z)=U\left(\frac{1}{z}\right)=
\hat{U}\left(z+\frac{1}{z}\right), V(z)=-V\left(\frac{1}{z}\right)=
\left(z-\frac{1}{z}\right)\hat{V}\left(z+\frac{1}{z}\right),\atop W(z)
=W\left(\frac{1}{z}\right)=\hat{W}\left(z+\frac{1}{z}\right),
\text{ where } \hat{U}(z),\hat{V}(z),\hat{W}(z)\in \mathbb{Q}(z).\end{equation}
If the number $x$ satisfies $x+\frac{1}{x}\in \mathbb{Q}$, then we obtain $U(x),
\frac{V(x)}{x-1/x},W(x)\in \mathbb{Q}$.

Now consider the values of the integral at the following points:
\begin{equation}\label{x} x_k=\frac{k+1-\sqrt{2k+1}}{k}=\frac{\sqrt{2k+1}-1}{\sqrt{2k+1}+1},
\text{ where }k\in \mathbb{N}.\end{equation}
In this case we have $x_k+\frac{1}{x_k}=\frac{2k+2}{k}$ and
$x_k-\frac{1}{x_k}=-2\frac{\sqrt{2k+1}}{k}$. This easily leads to
\begin{equation}\label{uvw2}U(x_k), \sqrt{2k+1}V(x_k),W(x_k)\in \mathbb{Q}.\end{equation}
Let us define $\Omega$ as the set of $0\leqslant y<1$ such that for all $x\in \mathbb{R}$ the inequality
\begin{equation}\label{omega}\left([x-2ay]-[x-(b-2a)y]-[(b-4a)y]\right)+\atop
+\left([x-ay]-[x-(b-a)y]-[(b-2a)y]\right)+\left([x]-[x-by]-[by]\right)\geqslant1\end{equation}
is satisfied.

The set $\Omega$ is a union of points, as well as closed, half-open and open intervals. Clearly, the points of the interval 
$\left[0;\frac{1}{b}\right)$ do not belong to $\Omega$, and thus the set of primes $p>\sqrt{bn}$ such that
$\left\{\frac{n}{p}\right\}\in\Omega$ is finite.
Denote the product of these primes as $\Delta$, and denote as
$\Delta_1$ the product of the primes $p>(b-2a)n$ satisfying
$\left\{\frac{n}{p}\right\}\in\Omega$.

Let $d_n$ be the least common multiple of 
$1,2,\dots,n$.

Let us introduce the following rational numbers:
\begin{equation*}R_{k,n}=\left\{\begin{aligned}& m^{-\frac{bn+1}{2}},& \text{ if } k=2m;\\
& 2^{\frac{3(b-2a)n+1}{2}} k^{-\frac{bn+1}{2}},& \text{ if } k=2m-1. \end{aligned}\right.\end{equation*}
\begin{equation*}S_{k,n}=\left\{\begin{aligned}& m^{-\frac{(b-2a)n+1}{2}},& \text{ if } k=2m;\\
& 2^{\frac{3(b-2a)n+1}{2}} k^{-\frac{(b-2a)n+1}{2}},& \text{ if } k=2m-1. \end{aligned}\right.\end{equation*}
\begin{equation*}T_{k,n}=\left\{\begin{aligned}& m^{-\frac{(b-4a)n+1}{2}},& \text{ if } k=2m;\\
& 2^{\frac{3(b-2a)n+1}{2}} k^{-\frac{(b-4a)n+1}{2}},& \text{ if } k=2m-1. \end{aligned}\right.\end{equation*}

\begin{lem} Let $x_k$ be defined by (\ref{x}), then we have
 $$A=R_{k,n} U(x_k)\in \mathbb{Z},\;
 B=S_{k,n} \frac{d_{bn}}{\Delta}  V(x_k)\sqrt{2k+1}\in
\mathbb{Z},\;
C=T_{k,n} d_{(b-2a)n}\Delta_1\frac{d_{bn}}{\Delta}W(x_k)\in\mathbb{Z}.$$
\end{lem}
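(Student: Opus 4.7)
The plan is to apply Nesterenko's Lemma 1 to express $U, V, W$ as finite rational functions of $z$, use the symmetries in (\ref{uvw}) to pass to rational functions of $w = z + 1/z$, and finally evaluate at $w_{k} = x_{k} + 1/x_{k} = 2 + 2/k$ while carefully tracking denominators. Throughout, set $N = (bn-1)/2$, $M = (bn+1)/2$, $D = \deg A = (3b-6a)n$; under the hypotheses ($b$, $n$ odd), the quantities $bn$, $(b-2a)n$, $(b-4a)n$, $D$ are all odd, so the half-integer exponents encountered below are in fact integers.

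Applying Lemma 1 with $k \mapsto k + bn$ yields $U(z) = z^{N}\sum_{j=0}^{D}c_{j}^{U}\bigl(z/(z-1)\bigr)^{j+1}$ with $c_{j}^{U} = \sum_{l=1}^{j+1}(-1)^{l-1}\binom{j}{l-1}A(-l-bn) \in \mathbb{Z}$ (the integrality of $A(-l-bn)$ follows from $\binom{-m}{r} = (-1)^{r}\binom{m+r-1}{r}$), and analogous formulas hold for $V, W$ with shifts $(b-a)n, (b-2a)n$ and with $A$ replaced by $A', A''$. For $V, W$, the additional denominators arising from factors such as $p_{3}'(-k) = (-1)^{k-1}/(k\binom{bn}{k})$ (and $p_{2}'$) are cleared by $d_{bn}$ (respectively $d_{bn}d_{(b-2a)n}$) via the classical identity $d_{m}/(k\binom{m}{k}) \in \mathbb{Z}$.

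Next, the functional equation $U(z) = U(1/z)$ forces the key identity $P_{0}^{U}(z) := \sum_{j}c_{j}^{U}z^{2N+j+1}(z-1)^{D-j} = \sum_{j}c_{j}^{U}(-1)^{j+1}(z-1)^{D-j}$, so $P_{0}^{U} \in \mathbb{Z}[z]$ is palindromic of effective degree $D$ and divisible by $z^{2N+1}$. Writing $P_{0}^{U}(z) = z^{e/2}\hat P_{0}^{U}(w)$ with $e = 2N + D + 1$, one gets $\hat P_{0}^{U} \in \mathbb{Z}[w]$ of degree $(b-3a)n$; combined with $(z-1)^{D+1} = z^{(D+1)/2}(w-2)^{(D+1)/2}$ (valid since $(z-1)^{2}/z = w - 2$), this yields
\[
\hat U(w) = \frac{\hat P_{0}^{U}(w)}{(w-2)^{(D+1)/2}}.
\]
For $V$, the antisymmetry $V(z) = -V(1/z)$ makes the analogue of $P_{0}$ have odd formal degree, forcing a factor $(z+1)$ that cancels against $(z - 1/z) = (z-1)(z+1)/z$ in the relation $V(z) = (z-1/z)\hat V(w)$; the $\sqrt{2k+1}$ in the statement of $B$ then comes from $x_{k} - 1/x_{k} = -2\sqrt{2k+1}/k$. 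An analogous derivation handles $\hat W$. Evaluating at $w_{k} = 2 + 2/k$, one finds $U(x_{k}) = Q k^{M}/2^{(D+1)/2}$ with $Q = \sum_{i}p_{i}(2k+2)^{i}k^{(b-3a)n-i} \in \mathbb{Z}$. For $k = 2m$ even, $k^{M} = 2^{M}m^{M}$ and $Q = 2^{(b-3a)n}\widetilde Q$ with $\widetilde Q \in \mathbb{Z}$; the index identity $(b-3a)n + M - (D+1)/2 = 0$ gives $R_{k,n}U(x_{k}) = \widetilde Q \in \mathbb{Z}$. For $k = 2m-1$ odd, the factor $2^{(3(b-2a)n+1)/2} = 2^{(D+1)/2}$ in $R_{k,n}$ absorbs the remaining power of $2$, again giving $R_{k,n}U(x_{k}) = Q \in \mathbb{Z}$. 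The same bookkeeping, with the extra $\sqrt{2k+1}$ factor for $V$ and the denominators $d_{bn}/\Delta$, $d_{(b-2a)n}\Delta_{1}d_{bn}/\Delta$ for $V, W$, handles $B$ and $C$.

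The main obstacle is the sharpening from $d_{bn}$ to $d_{bn}/\Delta$ (and the introduction of $\Delta_{1}$ for $C$). This requires a prime-by-prime analysis via Kummer's theorem on the $p$-adic valuation of binomial coefficients: for a prime $p \in (\sqrt{bn},\,bn]$, the condition $\{n/p\} \in \Omega$ from (\ref{omega}) is precisely the condition that the three binomial coefficients composing $A$ share an extra factor of $p$ at the arguments relevant to $c_{j}^{V}, c_{j}^{W}$; collecting these primes produces the factor $\Delta$, and an analogous analysis of $A''$ at primes $p > (b-2a)n$ yields $\Delta_{1}$. This step is the technical heart of the proof, and is where the "coefficient symmetry" highlighted in the introduction (and credited there to \cite{Bash2}) plays its essential role in beating the naive bound.
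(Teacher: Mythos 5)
Your route is genuinely different from the paper's. You descend explicitly to $\mathbb{Q}(w)$, $w=z+1/z$, via the palindromic structure of the numerator polynomials and then track the powers of $2$ and $k$ by direct evaluation at $w_k=2+2/k$; your index identities for $U$ (e.g.\ $(b-3a)n+\frac{bn+1}{2}-\frac{3(b-2a)n+1}{2}=0$) check out. The paper avoids all of this bookkeeping by proving only that $B^2$ is an algebraic integer: it writes $V(z)=z^{an-\frac{bn+1}{2}}\left(\frac{z}{z-1}\right)^{(b-2a)n+1}\sum_{j\geqslant(b-2a)n}b_j\left(\frac{z}{z-1}\right)^{j-(b-2a)n}$, observes that $t_1=\frac{x_k}{x_k-1}$ and $t_2=\frac{1}{1-x_k}$ are the two roots of $t^2-t-\frac{k}{2}=0$, and evaluates $-V(x_k)^2=V(x_k)V(1/x_k)=(t_1t_2)^{(b-2a)n+1}(\cdots)(\cdots)$ as a symmetric function of $t_1,t_2$; the factor $(t_1t_2)^{(b-2a)n+1}=(-k/2)^{(b-2a)n+1}$ is exactly what $S_{k,n}^2$ removes, the parity case distinction on $k$ supplies the powers of $2$, and then $B^2\in\mathbb{K}$ together with $B\in\mathbb{Q}$ from (\ref{uvw2}) forces $B\in\mathbb{Z}$.

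Two concrete issues with your version. First, you shift the sum for $V$ by $(b-a)n$, so your coefficients involve $A'(-j)$ for $(b-a)n<j\leqslant(b-a)n+3(b-2a)n$. The arithmetic input both proofs must import --- Lemma 4 of \cite{Nest2}, which is what actually delivers $\frac{d_{bn}}{\Delta}A'(-j)\in\mathbb{Z}$ --- covers only $an<j\leqslant an+3(b-2a)n$, so your range sticks out by $(b-2a)n$ values where the claim is not available. The paper instead shifts by $an$, which is legitimate because $A'(-an-1)=\dots=A'(-(b-a)n)=0$; this same vanishing kills the coefficients $b_j$ for $j<(b-2a)n$ and is what makes the effective degree of your $\hat P_0^V$ drop enough for the exponent of $m$ in $S_{k,n}$ (and $T_{k,n}$) to cancel --- a point your sketch verifies only for $U$ and waves at for $V$ and $W$, where it is not automatic. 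Second, you present the $\Delta$-cancellation as the ``technical heart'' to be established via Kummer's theorem and attribute to it the ``coefficient symmetry'' of the introduction. In fact that cancellation is cited wholesale from \cite{Nest2} (the paper proves nothing about $\Omega$ here), while the coefficient symmetry the introduction refers to is the palindromy (\ref{uvw}) --- used by you explicitly and by the paper via the $V(x_k)V(1/x_k)$ pairing --- whose role is to halve the exponents in $R_{k,n},S_{k,n},T_{k,n}$. Your plan becomes correct after changing the shift to $an$ and citing, rather than re-deriving, the $\frac{d_{bn}}{\Delta}$ integrality.
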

\begin{proof} Let us prove the statement for $B$. 
The statements for $A$ and $C$ can be proved by the same argument.

It suffices to show that  $B^2\in \mathbb{K}$, where $\mathbb{K}$ is the ring of 
algebraic integers, which would imply that $B$ is also an algebraic integer. 
Then from the property (\ref{uvw2}) we could say that $B\in\mathbb{Q}$, 
and consequently $B\in\mathbb{Z}$.

Let $A_1(z)$ be a polynomial of degree $3(b-2a)n$ such that 
$A_1(z)=A(z-an)$. Then it's easy to see that 
\begin{equation}\label{nuli}A_1^{\prime}(-1)=\dots=A_1^{\prime}(-(b-2a)n)=A^{\prime}(-an-1)=\dots=A^{\prime}(-(b-a)n)=0.\end{equation}
Let us rewrite (\ref{v_z}). Defining $l$ as $l=k-an$ and applying $(\ref{nuli})$, we obtain
\begin{multline*}V(z)=-z^{-\frac{bn+1}{2}}\sum_{k=(b-a)n+1}^{+\infty}A^{\prime}(-k)z^k=
-z^{-\frac{bn+1}{2}}\sum_{k=an+1}^{+\infty}A^{\prime}(-k)z^k=\\=
-z^{-\frac{bn+1}{2}+a n}\sum_{k=an+1}^{+\infty}A^{\prime}(-k)z^{k-an}=-z^{-\frac{bn+1}{2}+an}\sum_{l=1}^{+\infty}A^{\prime}(-l-an)z^l=
-z^{-\frac{bn+1}{2}+an}\sum_{l=1}^{+\infty}A_1^{\prime}(-l)z^l.\end{multline*}
Then by Lemma 1 we have
\begin{multline*}V(z)=z^{-\frac{bn+1}{2}+an}\sum_{j=0}^{3(b-2a)n-1}
b_j
\left(\frac{z}{z-1}\right)^{j+1},\text{ where }b_j=\sum_{k=1}^{j+1}(-1)^{k-1} A_1^{\prime}(-k){j\choose
k-1}.\end{multline*}
Lemma 4 of \cite{Nest2} states that
$$\frac{d_{bn}}{\Delta}A_1^{\prime}(-k)\in\mathbb{Z}\text{, where }1\leq
k\leq3(b-2a)n, \text{ i.e. } \frac{d_{bn}}{\Delta}b_j\in \mathbb{Z}$$
From (\ref{nuli}) it follows that
\begin{multline*}
V(z)=z^{-\frac{bn+1}{2}+an}\sum_{j=(b-2a)n}^{3(b-2a)n-1}
b_j
\left(\frac{z}{z-1}\right)^{j+1}=\\
=z^{-\frac{bn+1}{2}+an}\left(\frac{z}{z-1}\right)^{(b-2a)n+1}\sum_{j=(b-2a)n}^{3(b-2a)n-1}
b_j \left(\frac{z}{z-1}\right)^{j-(b-2a)n}.\end{multline*} 
It is true that $\frac{x_k}{x_k-1}=\frac{1-\sqrt{2k+1}}{2}=t_1$ and
$\frac{1}{1-x_k}=\frac{1+\sqrt{2k+1}}{2}=t_2$ are the solutions of the equation
$t^2-t-\frac{k}{2}=0$. Clearly, they must be algebraic numbers.

Applying (\ref{uvw}) yields that
\begin{equation*}-\left(
V(x_k)\right)^2=V(x_k)V\left(\frac{1}{x_k}\right)=\left(t_1 t_2\right)^{(b-2a)n+1}
\sum_{j=(b-2a)n}^{3(b-2a)n-1} b_j
t_1^{j-(b-2a)n}\sum_{j=(b-2a)n}^{3(b-2a)n-1}
b_j
t_2^{j-(b-2a)n}=\end{equation*}
\begin{equation}\label{v_x}=\left(\frac{k}{2}\right)^{(b-2a)n+1}
\sum_{j=(b-2a)n}^{3(b-2a)n-1} b_j
t_1^{j-(b-2a)n}\sum_{j=(b-2a)n}^{3(b-2a)n-1} b_j t_2^{j-(b-2a)n}.
\end{equation}

If $k$ is even, then for all positive integers $N$ we have $t_i^N\in
\mathbb{K}$ since $t_i\in
\mathbb{K}$.

If $k$ is odd, then for all positive integers $N$ we can write
$2^{\left[\frac{N+1}{2}\right]}t_i^N\in \mathbb{K}$
since $(t_i)^{2N}=\left(\frac{k+1\pm\sqrt{2k+1}}{2}\right)^N$ and
$(t_i)^{2N+1}=\frac{1\pm\sqrt{2k+1}}{2}\left(\frac{k+1\pm\sqrt{2k+1}}{2}\right)^N$.

Let us consider the following two cases:

1. $k=2m.$ Then by (\ref{v_x})  we have
$$B^2=-\left(\sum_{j=(b-2a)n}^{3(b-2a)n-1}\left(\frac{d_{bn}}{\Delta}b_j \right)t_1^{j-(b-2a)n}\right)
\left(\sum_{j=(b-2a)n}^{3(b-2a)n-1}\left(\frac{d_{bn}}{\Delta}b_j\right)
t_2^{j-(b-2a)n}\right)(2k+1)\in\mathbb{K}.$$

2. $k=2m-1.$ Then (\ref{v_x}) yields that
\begin{multline*}B^2=-\left(\sum_{j=(b-2a)n}^{3(b-2a)n-1}\left(\frac{d_{bn}}{\Delta}b_j \right)2^{(b-2a)n}t_1^{j-(b-2a)n}\right)\times \\
\times
\left(\sum_{j=(b-2a)n}^{3(b-2a)n-1}\left(\frac{d_{bn}}{\Delta}b_j\right)
2^{(b-2a)n}t_2^{j-(b-2a)n}\right)(2k+1)\in\mathbb{K}.\end{multline*}
This concludes the proof of the lemma.
\end{proof}
We proceed by formulating several known results, which have been stated in \cite{Bash2} as Lemmas 5 and 6.
\begin{pred}
Let $x\in\mathbb{R}$, $0<x<1.$ If the equation
$$\frac{z(z-a)(z-2a)}{(z-(b-2a))(z-(b-a))(z-b)}=\frac{1}{x}$$
has a unique solution $z_0>b$, then
$$\lim_{n\rightarrow+\infty}\frac{1}{n}\ln |U(x)|=M_1= 
\ln \left(\frac{(z_0- (b-2a))^{b-2a}(z_0-(b-a))^{b-a}(z_0-b)^b}{(z_0-2a)^{2a}(z_0-a)^a(b-4a)^{b-4a}(b-2a)^{b-2a}b^b}\right)-\frac{b}{2}\ln x.$$
\end{pred}
\begin{pred}
Denote
$$M_1=\ln \frac{|z_1+(b-2a)|^{b-2a}|z_1+(b-a)|^{b-a}|z_1+b|^b}{|z_1+2a|^{2a}|z_1+a|^a(b-4a)^{b-4a}(b-2a)^{b-2a}b^b}-\frac{b}{2}\ln x,$$
where $z_1$ is the complex root of the equation 
$$\frac{(z+2a)(z+a)z}{(z+(b-2a))(z+(b-a))(z+b)}=\frac{1}{x}$$
satisfying the condition $\mathfrak{Im}\;z_1>0.$ Then we have
$$\limsup_{n\rightarrow+\infty}\frac{1}{n}\ln |I(x)|\leqslant M_2.$$
\end{pred}
To compute 
\begin{equation}\label{m}N_1=\lim_{n\rightarrow\infty}\frac{1}{n} 
\ln \frac{d_{bn}}{\Delta}\qquad\text{ and } \qquad N_2=
\lim_{n\rightarrow\infty}\frac{1}{n}\ln d_{(b-2a)n}\Delta_1\frac{d_{bn}}{\Delta}, \end{equation}
we are going to use Lemma 6 from \cite{Nest2}.
\begin{lem}\label{sokr}
Let $u,v$ be real numbers such that 
$0<u<v<1$. Then
\begin{displaymath}
\lim\limits_{n\rightarrow\infty}\frac{1}{n}\sum\limits_{u\leq
\left\{\frac{n}{p}\right\}<v}\ln p=\psi(v)-\psi(u),
\end{displaymath}
where $\psi(x)=\frac{\Gamma^{\prime}(x)}{\Gamma(x)}$ is the logarithmic derivative
of the gamma function, and the sum is taken over all primes  $p$ such 
that the fractional part $\left\{\frac{n}{p}\right\}$
lies in the given range.
\end{lem}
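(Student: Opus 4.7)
The plan is to reindex the sum by grouping primes according to the integer part $k=\lfloor n/p\rfloor$ and then reduce everything to the prime number theorem. Since $0<u<v<1$, the condition $u\le\{n/p\}<v$ is equivalent to asking that $k+u\le n/p<k+v$ for some integer $k\ge 0$, i.e.\ that $p\in(n/(k+v),n/(k+u)]$. For $k\ge 1$ this interval is nested inside $(n/(k+1),n/k]$, so the different values of $k$ give disjoint intervals that together exhaust the set of primes counted on the left. Writing $\theta(x)=\sum_{p\le x}\ln p$, the sum therefore decomposes as
$$\frac{1}{n}\sum_{u\le\{n/p\}<v}\ln p=\sum_{k=0}^{\infty}\frac{\theta\!\left(\tfrac{n}{k+u}\right)-\theta\!\left(\tfrac{n}{k+v}\right)}{n}.$$

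Next, by the prime number theorem $\theta(x)=x+o(x)$, so for each fixed $k$ the $k$-th summand on the right tends to $\tfrac{1}{k+u}-\tfrac{1}{k+v}$ as $n\to\infty$. To interchange the limit with the infinite sum I would truncate at an auxiliary parameter $K$: handle the head $k<K$ termwise via the prime number theorem, and dominate the tail $k\ge K$ by the observation that every prime contributing for some $k\ge K$ satisfies $p\le n/(K+u)$, so the tail is at most $\theta(n/K)/n\le C/K$ uniformly in $n$ by Chebyshev's elementary bound $\theta(x)\ll x$. Sending first $n\to\infty$ and then $K\to\infty$ yields
$$\lim_{n\to\infty}\frac{1}{n}\sum_{u\le\{n/p\}<v}\ln p=\sum_{k=0}^{\infty}\left(\frac{1}{k+u}-\frac{1}{k+v}\right).$$

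To conclude I would identify the right-hand series with $\psi(v)-\psi(u)$ using the classical expansion $\psi(x)=-\gamma+\sum_{k=0}^{\infty}\bigl(\tfrac{1}{k+1}-\tfrac{1}{k+x}\bigr)$, valid for $x>0$; subtracting the formulas for $\psi(v)$ and $\psi(u)$ causes the $\gamma$ and all the $\tfrac{1}{k+1}$ terms to cancel and leaves precisely the series above.

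\textbf{Main obstacle.} The only delicate point is justifying the exchange of limit and infinite summation. Pointwise convergence of each term is immediate from the prime number theorem, but a uniform control of the tail genuinely requires a Chebyshev-type estimate $\theta(x)\ll x$; without it, contributions from small primes (large $k$) could in principle blow up. Once this uniformity is in place, the remainder is routine bookkeeping and a direct appeal to the series expansion of the digamma function.
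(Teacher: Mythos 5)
Your proof is correct. Note that the paper itself does not prove this lemma at all — it is quoted verbatim as Lemma 6 of Nesterenko's paper \cite{Nest2} — and your argument (partitioning the primes by $k=\lfloor n/p\rfloor$ into the disjoint intervals $(n/(k+v),\,n/(k+u)]$, applying the prime number theorem termwise, bounding the tail $k\ge K$ uniformly by $\theta(n/(K+u))/n\ll 1/K$ via Chebyshev, and summing $\sum_{k\ge0}\bigl(\frac{1}{k+u}-\frac{1}{k+v}\bigr)=\psi(v)-\psi(u)$ from the series expansion of the digamma function) is precisely the standard proof given in that source and its predecessors. All the delicate points, including the exchange of limit and summation and the fact that primes dividing $n$ are automatically excluded since $u>0$, are handled correctly.
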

Let us formulate a lemma by Hata (see \cite{Hata1}, Lemma 2.1), 
which will allow us to prove the principal theorem of this paper.
\begin{lem} \label{lem}
Let $n\in\mathbb N$, $\alpha \in\mathbb R$, let $\alpha$ be an irrational number, and let
$l_n=q_n\alpha+p_n$, where $q_n,p_n\in\mathbb Z$ and
\begin{equation*}
\lim\limits_{{n\rightarrow\infty}}\frac{1}{n}\ln|q_n|=\sigma,
\quad \limsup \limits_{n\rightarrow\infty}
\frac{1}{n}\ln|l_n|\leq-\tau,\quad \sigma,\tau>0.
\end{equation*}
Then $$\mu(\alpha)\leq 1+\frac{\sigma}{\tau}.$$
\end{lem}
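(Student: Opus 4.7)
The plan is to argue by contradiction: assuming $\mu(\alpha) > 1 + \sigma/\tau$, I will produce from the given linear forms $l_n$ an integer combination whose absolute value is forced to be simultaneously less than one and (eventually) nonzero. Concretely, fix $\kappa \in (1 + \sigma/\tau,\ \mu(\alpha))$ and an infinite sequence of rationals $p/q$ in lowest terms satisfying $|\alpha - p/q| < q^{-\kappa}$.

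For each such $p/q$ and each $n$, set
$$\Lambda_n := q\,p_n + p\,q_n \in \mathbb{Z}.$$
Substituting $p_n = l_n - q_n\alpha$ into this definition yields the identity $\Lambda_n = q\,l_n + q_n(p - q\alpha)$, whence
$$|\Lambda_n| \leq q|l_n| + |q_n|\cdot|q\alpha - p| < q|l_n| + |q_n|\,q^{1-\kappa}.$$

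To shrink this bound I would choose $\epsilon > 0$ small enough that $\kappa > 1 + (\sigma+\epsilon)/(\tau-\epsilon)$, then pick a constant $c$ in the (nonempty) interval $\bigl(\tfrac{1}{\tau-\epsilon},\ \tfrac{\kappa-1}{\sigma+\epsilon}\bigr)$. For each $q$ let $n = n(q) = \lceil c\log q\rceil$. Using the hypotheses $|q_n| \leq e^{(\sigma+\epsilon)n}$ and $|l_n| \leq e^{-(\tau-\epsilon)n}$ (valid for all sufficiently large $n$), both $q|l_n|$ and $|q_n|q^{1-\kappa}$ reduce to strictly negative powers of $q$ (up to $o(1)$), so $|\Lambda_n| \to 0$ as $q \to \infty$. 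Since $\Lambda_n$ is an integer, this forces $\Lambda_n = 0$ for every sufficiently large $q$.

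The equation $\Lambda_n = 0$ reads $p_n/q_n = -p/q$; combined with $\gcd(p,q) = 1$ it gives $q \mid q_n$, whence $q \leq |q_n| \leq q^{c(\sigma+\epsilon)+o(1)}$, which is impossible as $q \to \infty$ provided $c(\sigma+\epsilon) < 1$. Such a $c$ is available precisely when $\sigma < \tau$, settling that regime immediately. The main technical obstacle is the case $\sigma \geq \tau$ (the only regime in which $1 + \sigma/\tau \geq 2$ is consistent with Dirichlet's theorem), where every admissible $c$ forces $c(\sigma+\epsilon) \geq 1$. I plan to circumvent this by noting that, for any fixed $p/q$, the condition $p_n/q_n = -p/q$ can hold for only finitely many $n$ — indeed $-p_n/q_n \to \alpha$ is irrational while $-p/q$ is rational — so within the admissible $\log q$-long range of indices one can shift $n(q)$ by an $O(1)$ amount to reach an index with $\Lambda_n \neq 0$, at which point the inequality $1 \leq |\Lambda_n| < 1$ supplies the desired contradiction.
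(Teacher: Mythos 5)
The paper does not prove this lemma at all --- it is quoted verbatim from Hata \cite{Hata1}, Lemma 2.1 --- so there is no internal proof to compare against; your attempt must be judged on its own. The skeleton you use (form the integer $\Lambda_n = qp_n+pq_n$, bound it by $q|l_n|+|q_n|\,q^{1-\kappa}$, choose $n\asymp c\log q$ with $c\in\bigl(\tfrac{1}{\tau-\epsilon},\tfrac{\kappa-1}{\sigma+\epsilon}\bigr)$) is the standard and correct one, and your treatment of the case $\sigma<\tau$ is fine. You also correctly identify that the whole difficulty lives in the case $\sigma\geq\tau$ --- which, note, is the \emph{only} case that occurs in this paper, since the bounds obtained are all $\geq 2$, i.e.\ $\sigma\geq\tau$ throughout.

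Your resolution of that case has a genuine gap. You argue that for a fixed $p/q$ the set $S=\{n:\ p_n/q_n=-p/q\}$ is finite because $-p_n/q_n\to\alpha$ and $\alpha$ is irrational, and you conclude that an $O(1)$ shift of $n(q)$ lands outside $S$. The finiteness of $S$ is true, but the bound on $\max S$ (and on the length of runs inside $S$) is controlled by $|\alpha-p/q|$: if $p_n/q_n=-p/q$ then $|l_n|=|q_n|\,|\alpha-p/q|$, so $n\in S$ is only excluded once $e^{-(\tau-\epsilon)n}/e^{(\sigma-\epsilon)n}<|\alpha-p/q|$. Along your sequence of good approximations $|\alpha-p/q|<q^{-\kappa}$ with $\kappa>1+\sigma/\tau$, which permits $S$ to cover the \emph{entire} admissible window $\bigl(\tfrac{\log q}{\tau-\epsilon},\tfrac{(\kappa-1)\log q}{\sigma+\epsilon}\bigr)$; nothing in the hypotheses (which give only an \emph{upper} bound $\limsup\tfrac1n\ln|l_n|\leq-\tau$, no lower bound on $|l_n|$) rules this out, and the window has length growing like $\log q$, so ``shift by $O(1)$'' is not available. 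The standard ways to close this are: (i) assume, as Hata's Lemma~2.1 effectively does, a two-sided asymptotic $\lim\tfrac1n\ln|l_n|=-\tau$, in which case $\Lambda_n=0$ gives directly $|\alpha-p/q|=|l_n|/|q_n|\geq e^{-(\sigma+\tau+2\epsilon)n}\geq c\,q^{-1-(\sigma+3\epsilon)/(\tau-\epsilon)}$ and one is done without any shift; or (ii) add the non-degeneracy hypothesis $p_nq_{n+1}\neq p_{n+1}q_n$ for large $n$, so that consecutive indices cannot both lie in $S$ and a shift by exactly $1$ suffices. In the application of this paper alternative (i) is the natural one, since Statement~2 supplies the exact exponential order of the linear forms' coefficients. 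As written, your proof establishes the lemma only under the additional assumption $\sigma<\tau$, which excludes every case the paper actually uses.
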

Now we can state the principal theorem, 
which will be proved by applying the Hata's lemma to the sequence
$$E_n=S_{k,n}\frac{d_{bn}}{\Delta}\left(-\frac{\mathfrak{Im}(I(x_k))}{\pi}\sqrt{2k+1}\right)=
S_{k,n} \frac{d_{bn}}{\Delta}U(x_k) \alpha_k -
S_{k,n}\frac{d_{bn}}{\Delta}V(x_k)\sqrt{2k+1}=
P_n\alpha_k+Q_n.$$

By Lemma 2, we have $P_n,Q_n\in\mathbb{Z}$. Clearly, we can also write
\begin{equation*}
K_1=\lim_{n\rightarrow\infty}\frac{1}{n}\ln (S_{k,n})=
\left\{\begin{aligned}&-\frac{b-2a}{2}\ln m,& \text{ if } k=2m;\\
&-\frac{b-2a}{2}\ln k+\frac{3(b-2a)}{2}\ln 2,& \text{ if }
k=2m-1.
\end{aligned}\right.
\end{equation*}
\begin{theo} \label{t1}
Assume that $a,b\in \mathbb{N}$ satisfy $b>4a$, $x$ is defined by
(\ref{x}), the numbers $M_1$ and $M_2$ are defined by Statements 2 and 3, the set $\Omega$ is defined by
(\ref{omega}), and $N_1$ is defined by (\ref{m}).

If $M_2+K_1+N_1<0$, we have
$$\mu\left(\alpha_k\right)\leq1-\frac{\lim\limits_{n\to\infty}P_n}{\limsup\limits_{n\to\infty}E_n}
\leq 1- \frac{M_1+K_1+N_1}{M_2+K_1+N_1}.$$
\end{theo}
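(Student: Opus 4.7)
The plan is to apply Hata's Lemma~\ref{lem} to the integer linear form $E_n=P_n\alpha_k+Q_n$ introduced just before the theorem, identifying $\sigma$ with the exponential growth rate of $|P_n|$ and $\tau$ with the exponential decay rate of $|E_n|$.

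First I would verify that $E_n$ is genuinely a $\mathbb{Z}$-linear form in $\alpha_k$ and $1$. For $z=x_k\in(0,1)$ the logarithm $\ln x_k$ is real, so separating real and imaginary parts in~(\ref{integ}) (Statement 1) yields
$$-\frac{\mathfrak{Im}\,I(x_k)}{\pi}=U(x_k)\ln x_k-V(x_k).$$
Multiplying by $\sqrt{2k+1}$ and recalling $\alpha_k=\sqrt{2k+1}\ln x_k$ from~(\ref{alpha}) turns the right-hand side into $U(x_k)\alpha_k-\sqrt{2k+1}\,V(x_k)$; rescaling by $S_{k,n}\,d_{bn}/\Delta$ reproduces exactly $E_n=P_n\alpha_k+Q_n$, and Lemma~2 supplies the integrality of both $P_n$ and $Q_n$.

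Next I would collect the asymptotics. Statement~2 gives $\frac{1}{n}\ln|U(x_k)|\to M_1$; the definition of $S_{k,n}$ gives $\frac{1}{n}\ln S_{k,n}\to K_1$; and Lemma~\ref{sokr}, applied as indicated in~(\ref{m}), gives $\frac{1}{n}\ln(d_{bn}/\Delta)\to N_1$. Summing these three limits,
$$\sigma:=\lim_{n\to\infty}\frac{1}{n}\ln|P_n|=M_1+K_1+N_1.$$
Because $|\mathfrak{Im}\,I(x_k)|\le|I(x_k)|$, Statement~3 yields
$$\limsup_{n\to\infty}\frac{1}{n}\ln|E_n|\le M_2+K_1+N_1=:-\tau,$$
and the hypothesis $M_2+K_1+N_1<0$ is exactly what forces $\tau>0$. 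Plugging $\sigma$ and $\tau$ into Hata's Lemma~\ref{lem} then produces the rightmost inequality $\mu(\alpha_k)\le 1-(M_1+K_1+N_1)/(M_2+K_1+N_1)$; the sharper middle inequality is obtained by feeding the exact asymptotic quantities $\lim\frac{1}{n}\ln|P_n|$ and $\limsup\frac{1}{n}\ln|E_n|$ into Hata's lemma before invoking the $M_i+K_1+N_1$ estimates of Statements~2--3.

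The one genuine piece of work is the first step: the cleanest way to confirm that $E_n$ really has integer coefficients is through the identity $\sqrt{2k+1}\,V(x_k)\in\mathbb{Q}$ from~(\ref{uvw2}), which is precisely why Lemma~2 was stated with the factor $\sqrt{2k+1}$ attached to $V(x_k)$. Beyond that, the proof is a bookkeeping combination of the asymptotic results already compiled in Statements~2--3 and Lemma~\ref{sokr}, so I would not expect any serious obstacle; one only has to track the sign conventions in the imaginary part carefully.
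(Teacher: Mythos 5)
Your proposal is correct and follows exactly the route the paper itself takes: the paper's entire argument for Theorem~\ref{t1} consists of constructing $E_n=P_n\alpha_k+Q_n$ from $-\mathfrak{Im}(I(x_k))\sqrt{2k+1}/\pi$ via Statement~1, invoking Lemma~2 for integrality, assembling $\sigma=M_1+K_1+N_1$ and $\tau=-(M_2+K_1+N_1)$ from Statements~2--3, Lemma~\ref{sokr}, and the definition of $S_{k,n}$, and applying Hata's Lemma~\ref{lem}. No discrepancies to report.
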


We are going to formulate another lemma by Hata (see \cite{Hata3}, Lemma 2.3), which will allow us to prove another theorem.
\begin{lem}
Let $n\in\mathbb N$, and assume that $\alpha \in\mathbb R$ is not a quadratic irrationality (i.e., not a root of a quadratic integer polynomial). Take
$l_n=q_n\alpha+p_n$, $m_n=q_n\alpha^2+r_n$, where
$q_n,p_n,r_n\in\mathbb Z$, and assume
\begin{equation*}
\lim\limits_{{n\rightarrow\infty}}\frac{1}{n}\ln|q_n|=\sigma,
\quad \max\left\{\limsup \limits_{n\rightarrow\infty}
\frac{1}{n}\ln|l_n|,\limsup \limits_{n\rightarrow\infty}
\frac{1}{n}\ln|m_n|\right\}\leq-\tau,\quad \sigma,\tau>0.
\end{equation*}
Then we have 
$$\mu_2(\alpha)\leq 1+\frac{\sigma}{\tau}.$$
\end{lem}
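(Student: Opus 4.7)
The plan is to adapt the Diophantine-approximation argument that proves the preceding irrationality-measure lemma, replacing the linear form $q\alpha - p$ by the quadratic form $f(\alpha) = a\alpha^2 + b\alpha + c$ attached to a candidate approximating quadratic irrational $\beta$. Write $f \in \mathbb{Z}[x]$ for the primitive minimal polynomial of $\beta$, set $H = H(\beta) = \max(|a|,|b|,|c|)$, and let $\beta'$ denote the conjugate root. Substituting $q_n\alpha = l_n - p_n$ and $q_n\alpha^2 = m_n - r_n$ gives the key identity
$$N_n := cq_n - ar_n - bp_n = q_n f(\alpha) - am_n - bl_n \in \mathbb{Z}.$$
If $|\alpha - \beta| < H^{-\kappa}$, factoring $f(\alpha) = a(\alpha - \beta)(\alpha - \beta')$ together with the discriminant bound $|\beta - \beta'| = \sqrt{b^2 - 4ac}/|a| \leq \sqrt{5}\,H/|a|$ yields the sharpened estimate $|f(\alpha)| \leq C_1 H^{1-\kappa}$ for an absolute constant $C_1$; this gain of one power of $H$ over the naive estimate $H^{2-\kappa}$ is exactly what lowers the final exponent from $2 + \sigma/\tau$ to $1 + \sigma/\tau$. (Complex quadratic $\beta$ are handled automatically, since then $|\alpha - \beta| \geq |\operatorname{Im}\beta| \geq 1/(2H)$ already violates $|\alpha - \beta| < H^{-\kappa}$ for $\kappa > 1$.)

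Fix $\kappa > 1 + \sigma/\tau$ and choose $\varepsilon > 0$ so small that $\kappa > 1 + (\sigma+\varepsilon)/(\tau-\varepsilon)$. For large $n$ the growth hypotheses give $|q_n| \leq e^{n(\sigma+\varepsilon)}$ and $\max(|l_n|, |m_n|) \leq e^{-n(\tau-\varepsilon)}$. For each sufficiently large $H$, one can then select an integer $n$ in the interval
$$R(H) := \left(\tfrac{\ln H + C_2}{\tau - \varepsilon},\; \tfrac{(\kappa-1)\ln H - C_3}{\sigma + \varepsilon}\right),$$
which is non-empty and whose length grows linearly in $\ln H$ by the choice of $\varepsilon$. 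For any such $n$ one has both $|q_n f(\alpha)| \leq C_1 H^{1-\kappa} e^{n(\sigma+\varepsilon)} < 1/3$ and $|am_n + bl_n| \leq 2H e^{-n(\tau-\varepsilon)} < 1/3$, so $|N_n| < 1$ forces $N_n = 0$.

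Assume for contradiction that infinitely many quadratic $\beta$ satisfy $|\alpha - \beta| < H(\beta)^{-\kappa}$; the argument above then yields $N_n = 0$ for \emph{every} $n$ in the window $R(H(\beta))$. Each such relation reads $cq_n - ar_n - bp_n = 0$, a single linear condition on $(a,b,c) \neq 0$; three indices $n_1 < n_2 < n_3 \in R(H)$ with $\mathbb{Q}$-linearly independent triples $(q_{n_i}, p_{n_i}, r_{n_i})$ would then force $(a,b,c) = 0$, contradicting $H \geq 1$. The existence of such three indices above any threshold $n_0$ is supplied by non-quadraticity of $\alpha$: if every triple with $n \geq n_0$ lay in the kernel of a nonzero rational form $(\lambda, \mu, \nu)$, then
$$\lambda q_n + \mu p_n + \nu r_n = (\lambda - \mu\alpha - \nu\alpha^2)q_n + \mu l_n + \nu m_n = 0,$$
so letting $n \to \infty$ (using $|l_n|, |m_n| \to 0$ and $|q_n| \to \infty$) forces $\lambda = \mu\alpha + \nu\alpha^2$, making $\alpha$ a root of the integer polynomial $\nu X^2 + \mu X - \lambda$ of degree $\leq 2$, contrary to hypothesis. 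The main obstacle is quantitative: the window $R(H)$ slides to infinity with $H$, so the three independent indices must be located \emph{inside} $R(H)$; one must therefore verify that the first three indices above any threshold $n_0$ with linearly independent triples always lie within a segment whose length is absorbed by the $\asymp \ln H$ width of $R(H)$, which is precisely where the strict inequality $\kappa > 1 + \sigma/\tau$, rather than merely $\kappa \geq 1 + \sigma/\tau$, is used.
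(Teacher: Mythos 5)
First, note that the paper itself does not prove this lemma: it is quoted from Hata \cite{Hata3} (Lemma 2.3), so your argument can only be judged on its own merits. Most of your steps are sound and follow the standard template: the integer $N_n=cq_n-ar_n-bp_n=q_nf(\alpha)-am_n-bl_n$, the factorization $f(\alpha)=a(\alpha-\beta)(\alpha-\beta')$ giving $|f(\alpha)|\leq C_1H^{1-\kappa}$, the dismissal of complex $\beta$, and the window $R(H)$ in which $|N_n|<1$ forces $N_n=0$ are all correct.

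The proof is nevertheless incomplete at exactly the point you flag and then leave open: you must produce three indices $n_1<n_2<n_3$ \emph{inside} $R(H)$ with linearly independent triples $(q_{n_i},p_{n_i},r_{n_i})$. Your non-quadraticity argument only shows that such triples exist among \emph{all} $n\geq n_0$; it gives no upper bound whatsoever on how far above $n_0$ one must go, because the hypotheses ($\lim\frac1n\ln|q_n|=\sigma$, $\limsup\frac1n\ln|l_n|,\limsup\frac1n\ln|m_n|\leq-\tau$) do not control the gaps between ``independent'' indices. Nor can the gap be closed by a compactness or normal-vector argument: if all triples with $n\in R(H)$ lay in a plane with primitive normal $(\lambda,\mu,\nu)$, one gets $|\lambda-\mu\alpha-\nu\alpha^2|\leq 2\max(|\lambda|,|\mu|,|\nu|)\,e^{-n(\sigma+\tau-2\varepsilon)}$, which yields no contradiction since the height of the normal vector is not bounded. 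This is not a removable technicality: Hata's actual Lemma 2.3 carries an additional non-degeneracy hypothesis, namely that the determinant
$$\det\begin{pmatrix} q_n & p_n & r_n\\ q_{n+1} & p_{n+1} & r_{n+1}\\ q_{n+2} & p_{n+2} & r_{n+2}\end{pmatrix}$$
is nonzero for all large $n$ (the analogue of $p_nq_{n+1}\neq p_{n+1}q_n$ in the linear Lemma 2.1, and a condition one must verify in applications). With that hypothesis the three consecutive indices at the bottom of $R(H)$ immediately supply the required independent triples at the cost of a bounded factor $e^{2\sigma}$, and your argument closes. Without it, the statement as you have set it up is not proved, and the final sentence of your proposal is a description of the missing step rather than a proof of it.
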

Now we can easily formulate our second theorem by applying lemma 5 to the following sequences:
\begin{multline*}F_n=T_{k,n}d_{(b-2a)n}\Delta_1\frac{d_{bn}}
{\Delta}\left(-\frac{\mathfrak{Im}(I(x_k))}{\pi} \sqrt{2k+1}\right)=\\
=T_{k,n} d_{(b-2a)n}\Delta_1\frac{d_{bn}}{\Delta}U(x_k) \alpha_k
-T_{k,n}d_{(b-2a)n}\Delta_1\frac{d_{bn}}{\Delta}V(x_k)\sqrt{2k+1}
=X_n\alpha_k+Y_n;\end{multline*}
\begin{multline*}G_n=
T_{k,n}d_{(b-2a)n}\Delta_1\frac{d_{bn}}{\Delta}(2k+1)
\left(2\mathfrak{Re}(I(x_k))-2\frac{\mathfrak{Im}(I(x_k))}{\pi}\right)=\\
=T_{k,n}d_{(b-2a)n}\Delta_1\frac{d_{bn}}{\Delta}U(x_k)\alpha_k^2-
T_{k,n}d_{(b-2a)n}\Delta_1\frac{d_{bn}}{\Delta}(2k+1)W(x_k)=
X_n\alpha_k^2+Z_n.
\end{multline*}
By Lemma 2, we have $X_n,\;Y_n,\;Z_n\in\mathbb{Z}$. We can also see that 
\begin{equation*}
K_2=\lim_{n\rightarrow\infty}\frac{1}{n}\ln (T_{k,n})=\left\{\begin{aligned}&-\frac{b-4a}{2}\ln m,& \text{ if } k=2m;\\
&-\frac{b-4a}{2}\ln k+\frac{3(b-2a)}{2}\ln 2,& \text{ if }
k=2m-1.
\end{aligned}\right.
\end{equation*}
\begin{theo}\label{t2}
Assume that $a,b\in \mathbb{N}$ satisfies $b>4a$, $x_k$ is given by (\ref{x}),
the numbers $M_1$ and $M_2$ are defined by Statements 2 and 3, the set $\Omega$ is defined by
(\ref{omega}), and $N_2$ is defined by (\ref{m}).
\\
If $M_2+K_2+N_2<0$, then we have
$$\mu_2\left(\alpha_k\right)\leq
1-\frac{\lim\limits_{n\to\infty}X_n}
{\max\{\limsup\limits_{n\to\infty}F_n,\;\limsup\limits_{n\to\infty}G_n\}}
\leq 1- \frac{M_1+K_2+N_2}{M_2+K_2+N_2}.$$
\end{theo}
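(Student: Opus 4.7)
My plan mirrors the proof of Theorem~\ref{t1}, but applies Hata's Lemma~5 to the pair of linear forms $(F_n, G_n)$ in place of a single form. The first step is to verify the two rewritings $F_n = X_n\alpha_k + Y_n$ and $G_n = X_n\alpha_k^2 + Z_n$. Substituting the decomposition~(\ref{integ}) from Statement~1 and using $\alpha_k = \sqrt{2k+1}\,\ln x_k$, the specific combinations of $\mathfrak{Re}(I(x_k))$ and $\mathfrak{Im}(I(x_k))/\pi$ chosen in the definitions of $F_n$ and $G_n$ are tuned precisely so that the $\ln x_k$ cross-terms cancel, leaving only $U(x_k)\alpha_k$ (respectively $U(x_k)\alpha_k^2$) together with a rational remainder that will play the role of $Y_n$ (respectively $Z_n$).

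Next, integrality of $X_n, Y_n, Z_n$ follows from Lemma~2. The exponents in $R_{k,n}, S_{k,n}, T_{k,n}$ are chosen so that both $T_{k,n}/R_{k,n}$ and $T_{k,n}/S_{k,n}$ are positive integer powers (namely $m^{2an}$ and $m^{an}$ when $k = 2m$, with the analogous powers of $k$ when $k = 2m-1$), while $d_{(b-2a)n}$, $\Delta_1$ and $d_{bn}/\Delta$ are positive integers. Writing $X_n = (T_{k,n}/R_{k,n})\cdot d_{(b-2a)n}\Delta_1 (d_{bn}/\Delta)\cdot A$, $Y_n = -(T_{k,n}/S_{k,n})\cdot d_{(b-2a)n}\Delta_1\cdot B$ and $Z_n = -(2k+1)\,C$ with $A, B, C$ the integers supplied by Lemma~2 then gives $X_n, Y_n, Z_n \in \mathbb{Z}$.

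For the growth rates, Statement~2 together with the definitions of $K_2$ and $N_2$ yields
\[
\sigma := \lim_{n\to\infty}\tfrac{1}{n}\ln|X_n| = M_1 + K_2 + N_2,
\]
and the trivial majorations $|F_n|, |G_n| \leq c\cdot T_{k,n}d_{(b-2a)n}\Delta_1 (d_{bn}/\Delta)\cdot|I(x_k)|$ (with $c$ a constant depending only on $k$) combined with Statement~3 give
\[
\max\bigl\{\limsup\tfrac{1}{n}\ln|F_n|,\ \limsup\tfrac{1}{n}\ln|G_n|\bigr\} \leq M_2 + K_2 + N_2 =: -\tau.
\]
The hypothesis $M_2 + K_2 + N_2 < 0$ ensures $\tau > 0$; the useful regime $M_1 + K_2 + N_2 > 0$ (implicit when the stated bound is nontrivial) gives $\sigma > 0$. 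Applying Hata's Lemma~5 to $(F_n, G_n)$ then yields $\mu_2(\alpha_k) \leq 1 + \sigma/\tau = 1 - (M_1 + K_2 + N_2)/(M_2 + K_2 + N_2)$, which is the claimed bound.

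The main obstacle I anticipate is the algebraic verification in the first step: one must confirm that the combination of $\mathfrak{Re}(I(x_k))$ and $\mathfrak{Im}(I(x_k))/\pi$ defining $G_n$ does eliminate the $\ln x_k$ and $V$-valued cross-terms and produce a clean expression in $\alpha_k^2$ and $W(x_k)$ alone. Once this tuning (and the parallel, easier computation for $F_n$) has been done, the integrality bookkeeping and the asymptotics become a routine transcription of the argument used for Theorem~\ref{t1}.
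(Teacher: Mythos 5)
Your proposal coincides with the paper's own (essentially unwritten) proof: the paper merely defines $F_n$ and $G_n$, cites Lemma~2 for $X_n,Y_n,Z_n\in\mathbb{Z}$, and applies Hata's Lemma~5 with $\sigma=M_1+K_2+N_2$ (via Statement~2) and $\tau=-(M_2+K_2+N_2)$ (via Statement~3), exactly as you describe, including your correct bookkeeping with $T_{k,n}/R_{k,n}$ and $T_{k,n}/S_{k,n}$. The one verification you flag does work out, with the caveat that the paper's printed middle expression for $G_n$ contains a typo: it should be $(2k+1)\left(2\mathfrak{Re}(I(x_k))-2\frac{\mathfrak{Im}(I(x_k))}{\pi}\ln x_k\right)$, since only with the extra factor $\ln x_k$ do the $V\ln x_k$ cross-terms cancel to give $U(x_k)\alpha_k^2-(2k+1)W(x_k)$.
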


{\bf Remark 1.} In conclusion, let us give the parameter values that have been used to obtain the results 
presented in the beginning of this article. For each of the given values of $k$, the irrationality measure 
$\mu(\alpha_k)$ were obtained by taking $a=1$, $b=7$. Non-quadraticity measures $\mu_2(\alpha_k)$ 
have been derived by taking $a=2$, $b=23$ for $k=6$ and $a=1$, $b=13$ for $k=8,10,12$. Note that we 
couldn't estimate the quadratic irrationality  for odd values of $k$ because of the high growth rate 
of the ``denominators'' denoted as $q_n$ in Lemma 5.

{\bf Remark 2.} Using the same approach the author proved a few theorems. 
If one takes 
$y_k=\frac{k-1-i\sqrt{2k-1}}{k}$ in (\ref{integ}) instead of $x_k$ and uses the sketch of 
the proof Theorem~\ref{t1}, one 
can prove 
\begin{theo} Let $\beta_k=\sqrt{2k-1}\arctan\frac{\sqrt{2k-1}}{k-1}$. Then we have  \label{t3}
\begin{center} \begin{tabular}{|r|r|r|r|r|r|r|r|r|}
 \hline
 $k$&  $\mu \left(\beta_k\right)\leqslant$&$\mu_2 \left(\beta_k\right)\leqslant$& $k$&  $\mu \left(\beta_k\right)\leqslant$& $\mu_2 \left(\beta_k\right)\leqslant$ \\
\hline
\hline $2$ &$4.60105\dots$&---           &$8$ & $3.66666\dots$  &$14.37384\dots$\\
\hline $4$ &$3.94704\dots$&$44.87472\dots$ &$10$ & $3.60809\dots$ & $12.28656\dots$\\
\hline $6$& $3.76069\dots$&$19.19130\dots$ &$12$ & $3.56730\dots$ & $11.11119\dots$\\
\hline
 \end{tabular}
\end{center}
\end{theo}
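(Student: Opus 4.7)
The plan is to repeat the proofs of Theorems~\ref{t1} and~\ref{t2} essentially verbatim with $y_k=(k-1-i\sqrt{2k-1})/k$ in place of $x_k$. First observe $|y_k|^2=((k-1)^2+(2k-1))/k^2=1$, so $1/y_k=\overline{y_k}$; moreover $y_k+1/y_k=2(k-1)/k\in\mathbb Q$ and $y_k-1/y_k=-2i\sqrt{2k-1}/k$. The symmetry relations~(\ref{uvw}) then yield $U(y_k),W(y_k)\in\mathbb Q$ and $V(y_k)\in i\mathbb R$, with $i\sqrt{2k-1}\,V(y_k)\in\mathbb Q$. Using $\arctan t=(2i)^{-1}\ln\frac{1+it}{1-it}$, a direct calculation gives $y_k=e^{-i\theta_k}$ with $\theta_k=\arctan(\sqrt{2k-1}/(k-1))$, so that $\ln y_k=-i\theta_k=-i\beta_k/\sqrt{2k-1}$; this plays the role of the identity $\ln x_k=\alpha_k/\sqrt{2k+1}$ in the proof of Theorem~\ref{t1}.

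Substituting $\ln y_k=-i\beta_k/\sqrt{2k-1}$ into~(\ref{integ}), and likewise expressing $I(1/y_k)$ via $\ln(1/y_k)=i\theta_k$ and $V(1/y_k)=-V(y_k)$, the difference simplifies to
\[
\frac{\sqrt{2k-1}\,(I(1/y_k)-I(y_k))}{2\pi}\;=\;U(y_k)\,\beta_k-i\sqrt{2k-1}\,V(y_k),
\]
a linear form in $\beta_k$ with rational coefficients, replacing the quantity $-\tfrac{\operatorname{Im}I(x_k)}{\pi}\sqrt{2k+1}$ used in Theorem~\ref{t1}. A second combination, namely $(2k-1)(I(y_k)+I(1/y_k))$ corrected by a rational multiple of the above linear form so as to cancel its $\beta_k$-linear piece, produces the desired form $U(y_k)\beta_k^2-(2k-1)W(y_k)$, which plays the role of $G_n$ in Theorem~\ref{t2}. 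After scaling by the analogues of $S_{k,n}d_{bn}/\Delta$ and $T_{k,n}d_{(b-2a)n}\Delta_1 d_{bn}/\Delta$ (with $2k+1$ replaced throughout by $2k-1$), the proof of Lemma~2 transfers unchanged: the analogue of $B^2$ is again shown to lie in $\mathbb K$ by the same two-sum argument, now using the conjugate pair $t_{1,2}=(1\pm i\sqrt{2k-1})/2$, the roots of $t^2-t+k/2=0$. These are algebraic integers exactly when $k$ is even, which explains why the table reports $\mu_2(\beta_k)$ only for even $k$.

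The asymptotics of the sequences $P_n\beta_k+Q_n$ and $X_n\beta_k^2+Z_n$ follow from the analogues of Statements~2 and~3 adapted to $z=y_k$: the contribution $-\tfrac{b}{2}\ln|y_k|=0$ drops out of $M_1$, and the saddle-point and series analyses for $I$ and $U$ are redone at the unit-circle point $y_k$ rather than at a real point of $(0,1)$. Combined with Lemma~\ref{sokr} for $N_1,N_2$ and with Hata's Lemmas~\ref{lem} and~5, this produces the table entries after a numerical optimization over small $(a,b)$. The principal technical obstacle is exactly this adaptation of Statements~2 and~3 to the complex point $y_k$: the equation $z(z-a)(z-2a)/((z-(b-2a))(z-(b-a))(z-b))=1/y_k$ has complex right-hand side, so the dominant root $z_0$ and the complex saddle $z_1$ must be located and controlled off the real axis, and the saddle-point estimate for $I(y_k)$ must track a nontrivial complex phase. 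Once these estimates are in place the remainder is a direct translation of the proofs of Theorems~\ref{t1} and~\ref{t2}.
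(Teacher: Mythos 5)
Your proposal follows exactly the route the paper itself indicates: the paper gives no self-contained proof of Theorem~\ref{t3}, only the remark that one substitutes $y_k=(k-1-i\sqrt{2k-1})/k$ into (\ref{integ}) and repeats the arguments of Theorems~\ref{t1} and~\ref{t2}, deferring all details to \cite{Pol1}. Your expansion of that sketch --- the unit-circle identities $1/y_k=\overline{y_k}$ and $\ln y_k=-i\beta_k/\sqrt{2k-1}$, the rational linear and quadratic forms in $\beta_k$ built from $I(y_k)$ and $I(1/y_k)$, the conjugate pair of roots of $t^2-t+k/2=0$ in the integrality argument, and the honestly flagged need to redo Statements~2 and~3 at a complex point of the unit circle --- is consistent with, and more detailed than, what the paper itself provides.
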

One can read the proof of Theorem \ref{t3} in \cite{Pol1}.

If one takes 
$y_k=\frac{1}{2}-i\frac{\sqrt{3}}{6}$ in (\ref{integ}) instead of $x_k$ and uses the 
sketch of the proof Theorem~\ref{t1} 
(one needs Lemma \ref{lem} in a somewhat different form), one 
can prove 
\begin{theo}\label{t4}
For any 
$\varepsilon>0$ there exists $0<q(\varepsilon)\in\mathbb{Z}$ such that for any
$q\geqslant q(\varepsilon), p_1,p_2\in \mathbb{Z}$ we have
\begin{equation*}\label{max}\max\left(\left|\ln 3-\frac{p_1}{q}\right|,\left|\frac{\pi}{\sqrt{3}}-\frac{p_2}{q}\right|\right)\geqslant q^{-3.86041\dots-\varepsilon}.
\end{equation*}
\end{theo}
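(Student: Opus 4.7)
The plan is to adapt the proof of Theorem~\ref{t1} to the complex evaluation point $y = \frac{1}{2} - \frac{i\sqrt{3}}{6}$. Two preliminary computations drive everything: $|y|^2 = 1/3$ and $\arg y = -\pi/6$, so that
$$\ln y = -\tfrac{1}{2}\ln 3 - \tfrac{i\pi}{6}, \qquad y + \tfrac{1}{y} = 2 + \tfrac{i}{\sqrt{3}}.$$
The first relation shows that the two target constants $\ln 3$ and $\pi/\sqrt{3}$ are already encoded in $\ln y$; the second, together with the symmetry relations (\ref{uvw}), implies that $U(y), V(y), W(y)$ lie in $\mathbb{Q}(i\sqrt{3})$, so each may be written uniquely as $X_1 + i\sqrt{3}\,X_2$ with $X_1, X_2 \in \mathbb{Q}$. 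The argument of Lemma~2 — reducing the question to algebraic-integer membership via the symmetry and then descending to $\mathbb{Z}$ — then carries over inside the field $\mathbb{Q}(i\sqrt{3})$ with the same denominator factors $d_{bn}/\Delta$ and $d_{(b-2a)n}\Delta_1 d_{bn}/\Delta$, yielding six integer sequences $U_1(n), U_2(n), V_1(n), V_2(n), W_1(n), W_2(n)$.

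Next I would plug everything into the formula of Statement~1. Setting $L := \ln 3$ and $P := \pi/\sqrt{3}$, so that $\ln y = -L/2 - i\sqrt{3}\,P/6$ and $i\pi = i\sqrt{3}\,P$, the whole expression $I(y)$ decomposes as $\mathcal{A} + i\sqrt{3}\,\mathcal{B}$, where $\mathcal{A}$ and $\mathcal{B}$ are $\mathbb{Q}$-linear combinations of the six real monomials
$$1,\quad L,\quad P,\quad L^2,\quad P^2,\quad LP,$$
with coefficients that are $\mathbb{Z}$-linear in the six integer sequences above (after clearing a common denominator). A saddle-point analysis analogous to Statements~2 and~3, carried out at the complex argument $y$ — where the relevant cubic equation acquires complex roots — produces asymptotic constants $M_1, M_2$, and Lemma~\ref{sokr} produces the prime-denominator constant $N_1$. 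Together these control the growth rate of the integer coefficients and the decay rate of $\mathcal{A}(n)$ and $\mathcal{B}(n)$ as $n \to \infty$.

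The last ingredient is the appropriate multidimensional generalisation of Hata's Lemma~\ref{lem}, which Remark~2 refers to as ``Lemma~\ref{lem} in a somewhat different form''. A typical formulation states: given two sequences of integer linear forms in $\{1, \alpha, \beta, \alpha^2, \beta^2, \alpha\beta\}$ that decay at rate $e^{-\tau n}$, while the ``denominator'' $q_n$ (the coefficient of $1$) grows as $e^{\sigma n}$, one obtains the joint approximation bound $\max(|q\alpha - p_1|, |q\beta - p_2|) \ge q^{-\sigma/\tau - \varepsilon}$ for all sufficiently large $q$. Applying this with $\alpha = \ln 3$, $\beta = \pi/\sqrt{3}$ and optimising over admissible parameters $a, b \in \mathbb{N}$ with $b > 4a$ would produce the advertised constant $3.86041\ldots$.

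The principal obstacle, I expect, is precisely the invocation of this generalised lemma. Because $U(y) \neq 0$ in general, the nuisance monomials $L^2, P^2, LP$ really do appear with nonzero coefficients in the expansion of $\mathcal{A}$ and $\mathcal{B}$, so the scalar Lemma~\ref{lem} does not apply directly; one must either formulate and prove the six-dimensional Hata-type variant directly (in the spirit of \cite{Hata1}, using the six independent integer sequences to absorb the extra monomials), or invoke a transference / linear-independence argument to reduce to the three-dimensional case $\{1, L, P\}$. Everything else — the complex substitution, the integrality inside $\mathbb{Q}(i\sqrt{3})$, the saddle-point asymptotics, and the numerical optimisation — is a routine adaptation of the techniques already developed for Theorems~\ref{t1} and~\ref{t2}.
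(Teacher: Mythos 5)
First, note that the paper does not actually prove Theorem~\ref{t4}; it only indicates the substitution $y=\frac12-\frac{i\sqrt3}{6}$ and defers the details to \cite{Pol1}, so your proposal can only be compared against that sketch. Your preliminary computations are right ($\ln y=-\frac12\ln 3-\frac{i\pi}{6}$, $y+\frac1y=2+\frac{i}{\sqrt3}$, hence $U(y),V(y),W(y)\in\mathbb{Q}(i\sqrt3)$, and the integrality argument of Lemma~2 does transfer, since here $t_1=\frac{y}{y-1}$ and $t_2=\frac{1}{1-y}$ satisfy $t^2-t+i\sqrt3=0$ and are algebraic integers). But there is a genuine gap at the central step, and you have located it yourself without closing it: you expand the full integral $I(y)$, obtain two real forms in the six monomials $1,L,P,L^2,P^2,LP$, and then appeal to a ``six-dimensional Hata-type variant.'' No such lemma can deliver the stated conclusion: in the determinant (or $q\Lambda_n$) argument the triple $(q,p_1,p_2)$ has nothing to pair against the coefficients of $L^2$, $P^2$, $LP$, so forms containing those monomials give no simultaneous-approximation bound for $L$ and $P$ alone. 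The ``typical formulation'' you state is therefore not a correct lemma, and the alternative ``transference / linear-independence argument'' is not an argument.

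The missing idea is the complex analogue of the step that defines $E_n$ in the proof of Theorem~\ref{t1}, where one does not use $I(x_k)$ itself but only $-\mathfrak{Im}(I(x_k))/\pi=U(x_k)\ln x_k-V(x_k)$. Since $A$ has rational coefficients, $\overline{I(\bar z)}=-\frac12U(z)\ln^2z+V(z)\ln z-\frac12W(z)+i\pi\bigl(U(z)\ln z-V(z)\bigr)$, whence
\begin{equation*}
U(y)\ln y-V(y)=\frac{\overline{I(\bar y)}-I(y)}{2\pi i},
\end{equation*}
and both $I(y)$ and $I(\bar y)$ are exponentially small by (the conjugate instances of) Statement~3. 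This combination kills $\ln^2 y$ entirely, so writing $U(y)=u_1+i\sqrt3\,u_2$, $V(y)=v_1+i\sqrt3\,v_2$ and $\ln y=-\frac{L}{2}-\frac{i\sqrt3}{6}P$ one gets \emph{two} small forms that are genuinely linear in $1,L,P$ with rational coefficients, namely $-\frac{u_1}{2}L+\frac{u_2}{2}P-v_1$ and $-\frac{u_2}{2}L-\frac{u_1}{6}P-v_2$; after clearing denominators these feed into the standard two-form version of Hata's lemma for simultaneous approximation (this is the ``somewhat different form'' of Lemma~\ref{lem} that Remark~2 alludes to), which needs only a non-degeneracy condition on consecutive $n$. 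With this replacement the rest of your outline (asymptotics $M_1,M_2,N_1$, optimisation over $a,b$) goes through; without it the proof does not get off the ground.
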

One can read the proof of Theorem \ref{t4} in \cite{Pol1}.

If one considers the integral (\ref{integ}), where
$$A(x)={x+10n+1\choose 10n+1} {x+9n+1\choose 8n+1}{x+8n+1\choose 6n+1}{x+7n+1\choose 4n+1},$$ 
takes $x_k=\sqrt{2k+1}\ln\frac{k+1-\sqrt{2k+1}}{k}$, then using the sketch of the proof 
Theorem~\ref{t2} one can prove
\begin{theo} \label{t5} Let $\alpha_k=\sqrt{2k+1}\ln\frac{\sqrt{2k+1}-1}{\sqrt{2k+1}+1}$. Then we have
\begin{center}
\begin{tabular}{|r|r|r|r|r|r|r|r|r|r|}
 \hline
 $k$ &  $\mu_2 \left(\alpha_k\right)\leqslant$ & $k$ &  $\mu_2 \left(\alpha_k\right)\leqslant$&$k$& $\mu_2 \left(\alpha_k\right)\leqslant$ &$k$& $\mu_2 \left(\alpha_k\right)\leqslant$&$k$& $\mu_2 \left(\alpha_k\right)\leqslant$\\
\hline
\hline $2$ & $18.5799\dots$  & $8$ &  $10.3786\dots$ & $13$ & $245.5913\dots$& $16$& $9.03034 \dots$ & $19$ & $55.9694\dots$\\
\hline $4$ & $12.8416\dots$  & $10$ & $9.86485\dots$ & $14$ & $9.23973\dots$ & $17$ & $71.3960\dots$& $20$& $8.7192\dots$\\
\hline $6$ & $11.2038\dots$  & $12$ & $9.50702\dots$ & $15$ & $105.4297\dots$& $18$ & $8.86054\dots$& $21$ & $47.1243\dots$\\
\hline
 \end{tabular}
\end{center}
\end{theo}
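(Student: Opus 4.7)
The plan is to re-run the proof of Theorem~\ref{t2} with the three-factor polynomial $A(x)$ replaced by the four-factor product $\tilde A(x)$ given in the statement; the architecture (Statement~1 expansion, symmetry of $U,V,W$, denominator lemma, saddle-point asymptotics, Hata's Lemma~5) carries over unchanged. The polynomial $\tilde A$ has degree $28n+4$, and each of its four binomial factors is symmetric about $x=-(5n+1)$, so $\tilde A(-x-(10n+2))=\tilde A(x)$; this common centre of symmetry replaces the coefficient symmetry of $A$ used in~\eqref{uvw}.

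Granting this identity, the expansion of Statement~1 applied to $\tilde A$ produces rational functions $\tilde U,\tilde V,\tilde W$ given by power series whose lower indices are dictated by the zero multiplicities of $\tilde A$, $\tilde A'$, $\tilde A''$ at small negative integers: the multiplicity of $\tilde A$ at $-k$ equals the number of the four ranges $[1,10n+1]$, $[n+1,9n+1]$, $[2n+1,8n+1]$, $[3n+1,7n+1]$ that contain $k$, so the lower indices for the three series are $10n+2$, $9n+2$, and $8n+2$ respectively. The palindromic symmetry then forces $\tilde U(z)=\tilde U(1/z)$, $\tilde V(z)=-\tilde V(1/z)$, $\tilde W(z)=\tilde W(1/z)$, hence $\tilde U(x_k),\sqrt{2k+1}\,\tilde V(x_k),\tilde W(x_k)\in\mathbb{Q}$ at the algebraic point $x_k$ of~\eqref{x}, exactly as in~\eqref{uvw2}. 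I would then replay the proof of Lemma~2 with $\tilde A$ in place of $A$: a shift of the summation index, Lemma~1 to reduce each series to a finite sum of powers of $z/(z-1)$, and Lemma~4 of~\cite{Nest2} applied with the new degree yield explicit integer-normalising factors $\tilde R_{k,n},\tilde S_{k,n},\tilde T_{k,n}$, a new set $\tilde\Omega$ defined by the step-function inequality associated with the four binomials, and corresponding products $\tilde\Delta,\tilde\Delta_1$ of anomalous primes.

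On the analytic side the saddle-point argument proving Statements~2 and~3 yields $M_1$ and $M_2$ as logarithms of explicit rational expressions evaluated at the real (resp.\ complex with positive imaginary part) root of the critical equation associated with $\tilde A(z)z^{-x}$, which is now rational of degree four rather than three; the prime-counting input $N_2$ is obtained by feeding $\tilde\Omega$ into Lemma~3. One then assembles the two integer sequences $\tilde F_n=\tilde X_n\alpha_k+\tilde Y_n$ and $\tilde G_n=\tilde X_n\alpha_k^2+\tilde Z_n$ from the imaginary part and from a suitable real/imaginary combination of $I(x_k)$, scaled by the same arithmetic denominator, and applies Hata's Lemma~5 with $\sigma=M_1+K_2+N_2$ and $\tau=-(M_2+K_2+N_2)$, where $K_2=\lim\tfrac{1}{n}\ln\tilde T_{k,n}$. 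A numerical optimisation for each listed value of $k$ then reproduces the tabulated bounds.

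The main obstacle I anticipate is the arithmetic denominator step. With four binomial factors the zero patterns of $\tilde A'$ and $\tilde A''$ and the shift under which $\tilde U,\tilde V,\tilde W$ express cleanly in terms of $z+1/z$ and $z-1/z$ are noticeably more delicate than in Lemma~2, and pinning down the correct set $\tilde\Omega$ together with the two thresholds (analogous to $\sqrt{bn}$ and $(b-2a)n$) that define $\tilde\Delta$ and $\tilde\Delta_1$ requires careful bookkeeping of which step-function combination controls the $p$-adic valuations of the power-series coefficients. Once these are fixed, the remaining steps are essentially mechanical.
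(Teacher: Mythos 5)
Your proposal follows exactly the route the paper itself indicates: it proves Theorem~\ref{t5} only by sketch, instructing the reader to rerun the argument of Theorem~\ref{t2} with the four-factor polynomial and deferring all details to \cite{Pol2}, and your outline (degree $28n+4$, common centre of symmetry $-(5n+1)$, lower summation indices $10n+2$, $9n+2$, $8n+2$, the modified $\Omega$, $\Delta$, $\Delta_1$, and the application of Hata's Lemma~5) is a correct and in fact more detailed elaboration of that same sketch. No genuinely different method is involved, so there is nothing substantive to compare.
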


One can read the proof of Theorem \ref{t5} in \cite{Pol2}.

The author would like to express his deep gratitude to his scientific advisor Yuri 
Nesterenko and to Nikolai Moshchevitin for their interest in the research and their timely advice.

\end{document}